\theoremstyle{plain}
\newtheorem{thm}{Theorem}[section]
\newtheorem{lemma}[thm]{Lemma}
\newtheorem{conj}[thm]{Conjecture}
\newtheorem{cor}[thm]{Corollary}
\theoremstyle{definition}
\newtheorem{defn}[thm]{Definition}
\newtheorem{remark}[thm]{Remark}
\title{Property (T) in k-gonal random groups}
\author{MurphyKate Montee}
\date{March 2021}
\address{Department of Mathematics and Statistics, Carleton College}
\email{murphykate.montee@gmail.com}
\begin{document}

\begin{abstract}
    The $k$-gonal models of random groups are defined as the quotients of free groups on $n$ generators by cyclically reduced words of length $k$. As $k$ tends to infinity, this model approaches the Gromov density model. In this paper we show that for any fixed $d_0 \in (0, 1)$, if positive $k$-gonal random groups satisfy Property (T) with overwhelming probability for densities $d >d_0$, then so do $nk$-gonal random groups, for any $n \in \mathbb{N}$. In particular, this shows that for densities above 1/3, groups in $3k$-gonal models satisfy Property (T) with probability 1 as $n$ approaches infinity.
\end{abstract}

\maketitle

\section{Introduction}
The study of random groups is one way to approach questions of the form `Does a \textit{typical} group have property P?' To answer this question, one needs to choose a model of random groups, and then calculate the probability that property P is satisfied. Generally speaking we are interested in the asymptotic behavior of this probability as some constraint goes to infinity.

There are many models of random groups (an excellent introduction to this topic is \cite{Oll}), the most studied of which is the Gromov density model, first introduced by Gromov in \cite{Gro91}. In this model, a group presentation is chosen uniformly at random by fixing the number of generators, and choosing cyclically reduced relators of a certain length. The number of relators chosen is controlled by a fixed constant called the \textit{density}. One then analyzes the probability of a property being satisfied as both the length and the number of relators goes to infinity. This probability generally tends to 0 or 1, and in many important cases there is a \textit{sharp threshold} $d_0$, so that for densities above $d_0$ the probability is 1, and below $d_0$ the probability is 0 (or vice versa). Some examples of this phase shift behavior include properties such as hyperbolicity \cite{Gro91, Oll04}, small cancellation  \cite{Gro91}, and satisfying Dehn's algorithm \cite{Sho}.

It is known that there is a sharp threshold for Kazhdan's Property (T); however its precise value is unknown. Property (T) was introduced by Kazhdan in \cite{Kaz} as a tool to study lattices in Lie groups, but turned out to be important in several areas of mathematics and computer science, especially in the study of expander graphs (for more information, see \cite{Lub}). In \cite{Zuk}, \.{Z}uk stated and sketched a proof that for $d>1/3$, random groups in the Gromov density model satisfy, with overwhelming probability, Property (T). The proof was completed in \cite{KK}.

Similarly, we do not yet know a sharp threshold for \textit{cubulation} of random groups (i.e. admitting a cocompact action on a CAT(0) cube complex without a global fixed point). In \cite{MP}, Mackay-Przytycki built on the work of Ollivier-Wise \cite{OW} to show that at densities $d<5/24$, Gromov random groups are, with probability 1, cubulated. This bound was improved to $d<3/14$ in \cite{M} using a generalization of the methods in \cite{MP}.

For infinite hyperbolic groups, being cubulated and having Property (T) are mutually exclusive, so together these results show that the sharp threshold at which Gromov random groups fail to satisfy Property (T) must be between 1/3 and 3/14.

In order to prove his theorem, \.{Z}uk introduced a new model of random groups, called the \emph{triangular model}.  This was extended to the \emph{square model} in \cite{Odr}, and in \cite{calum} this was further generalized this to a class of models, here called the \emph{$k$-gonal models}. Roughly speaking, as $k$ approaches infinity the behavior of $k$-gonal random groups should reflect the behavior of Gromov random groups. Odrzygóźdź showed that in the $6$-gonal model, $d=1/3$ is the sharp threshold at which the probability of a group satisfying Property (T) switches from 1 to 0. Figure \ref{fig:stateoftheart} illustrates what is currently known about values of the sharp threshold for Property (T) in the $k$-gonal models. 

\begin{figure}
    \centering
    \includegraphics[width = .75\textwidth]{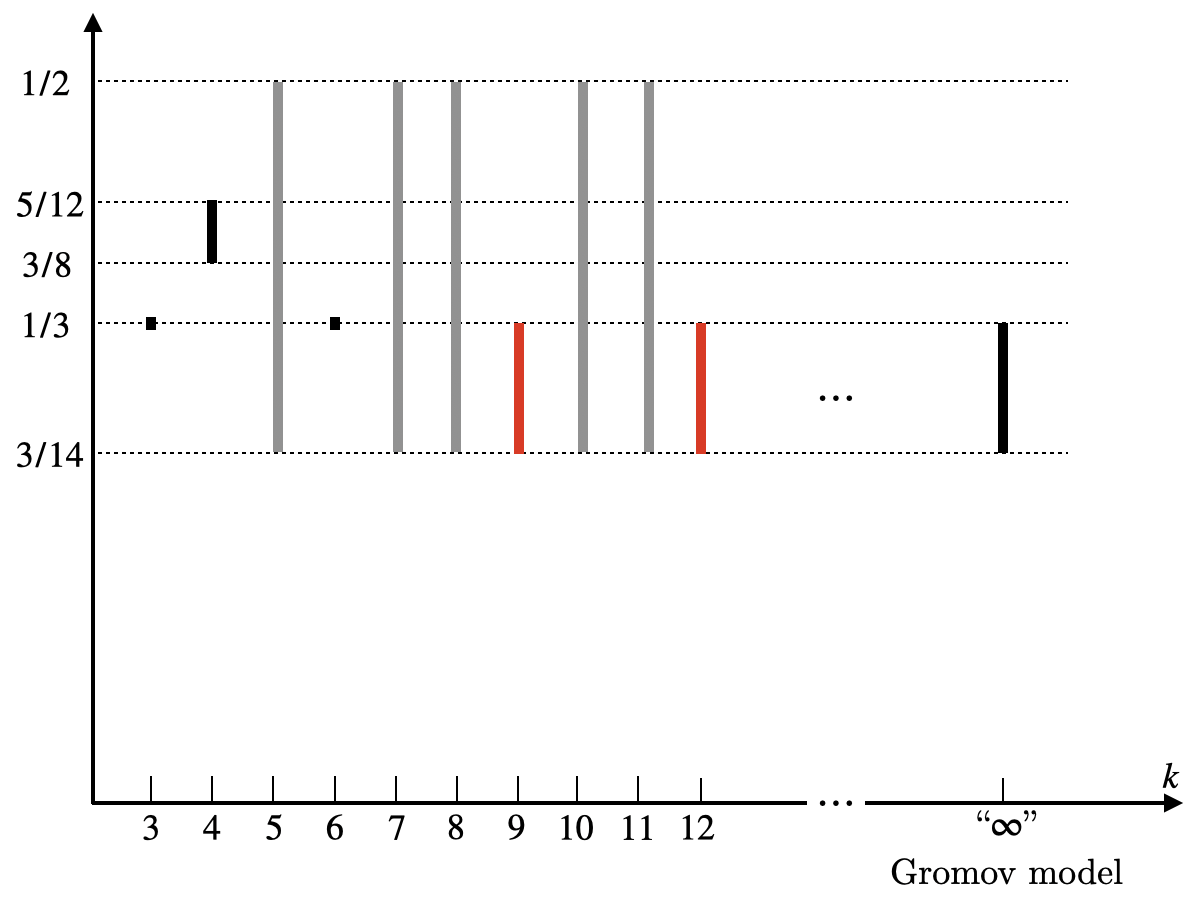}
    \caption{Possible values of the sharp threshold for Property (T) in the $k$-gonal models. Red indicates (some of the) results from this paper.}
    \label{fig:stateoftheart}
\end{figure}

In this paper, we show that show that any upper bound on the sharp threshold for Property (T) in the positive $k$-gonal model gives an upper bound in the $jk$-gonal model for all $j \in \mathbb{N}$. More precisely, we prove the following theorem:

\begin{restatable}{thm}{main}\label{thm:main}
Let $d_0$ be some fixed value in $(0, 1)$, and suppose that with overwhelming probability a random group at density $d>d_0$ in $\mathcal{M}^+_{k}(n, d)$ satisfies Property (T). Then with overwhelming probability, for any $d' \in (d_0, d)$ and for any $j \in \mathbb{N}$, a random group in $\mathcal{M}_{jk}(n, d')$ or $\mathcal{M}^+_{jk}(n, d')$ satisfies Property (T).
\end{restatable}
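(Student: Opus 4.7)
My plan is to reduce the $jk$-gonal model to the positive $k$-gonal one via a splitting of relators. In the positive case $\mathcal{M}^+_{jk}(n, d')$, each positive word of length $jk$ factors uniquely as $w_1 w_2 \cdots w_j$ with each $w_i$ positive of length $k$, and since the letters of a uniform positive length-$jk$ word are i.i.d., so are the $j$ pieces. Collecting pieces across all $r \in R$ gives a multiset $R'$ of size $j|R|$ that, modulo negligible collisions, is distributed as a random relator set in $\mathcal{M}^+_k(n, \tilde d)$ with $\tilde d \approx jd'$ (or, if $jd' \ge 1$, covering essentially all length-$k$ positive words).

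A naive hope is to transfer Property (T) directly from $G' = \langle S \mid R' \rangle$ to $G = \langle S \mid R \rangle$. However, since each $r \in R$ is a product of elements of $R'$, we have $\langle\langle R \rangle\rangle \subseteq \langle\langle R' \rangle\rangle$, so $G'$ is a \emph{quotient} of $G$, and (T) would pass in the wrong direction. To fix this I would work at the level of \.{Z}uk's spectral criterion: the link of a vertex in the presentation 2-complex is a random graph whose edges come from pairs of consecutive letters in the relators, and for both $\mathcal{M}^+_{jk}(n, d')$ and $\mathcal{M}^+_k(n, \tilde d)$ with $\tilde d \approx jd'$ these edges have identical marginal distributions and matching total count. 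Hence the link's spectral gap in the $jk$-gonal model at density $d'$ asymptotically agrees with that in the $k$-gonal link at density $jd'$. Since the hypothesis (T) for the positive $k$-gonal model is realized, in all the known proofs, via a spectral gap $\lambda_1 > \frac{1}{2}$ on the link, one obtains the same gap for the $jk$-gonal link and concludes (T) for $G$.

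The main obstacles I anticipate, in order of difficulty, are as follows. First, the link-edge correspondence is not a perfect coupling, since the $jk$ edges from a single length-$jk$ relator have a joint structure slightly different from $j$ independent length-$k$ relators, and these correlations require control via moment-method or concentration estimates. Second, in the regime $jd' \ge 1$ one should instead apply the hypothesis at some fixed $\tilde d \in (d_0, 1)$ and show that the $jk$-gonal link dominates the $k$-gonal link at density $\tilde d$ (which should suffice, since more random edges typically only improve the spectral gap). Third, the non-positive case $\mathcal{M}_{jk}(n, d')$ requires handling: length-$k$ pieces of cyclically reduced length-$jk$ words are only reduced in general, and can include inverse generators (so the reduction to the positive hypothesis needs a further step); however, a random length-$k$ substring is cyclically reduced with probability $1 - O(1/n)$, and the passage between positive and non-positive can be carried out at the link level, where both kinds of edges contribute uniformly.
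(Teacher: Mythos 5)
There is a genuine gap, and it is structural rather than technical. The theorem is a conditional, black-box transfer: the \emph{only} hypothesis available is that groups in $\mathcal{M}^+_k(n,d)$ satisfy Property (T) w.o.p. Your pivot to \.{Z}uk's spectral criterion quietly replaces this hypothesis with the stronger assumption that the links of positive $k$-gonal presentations have spectral gap $\lambda_1 > 1/2$ w.o.p.; that is how the hypothesis happens to be verified for $k=3$, but it is not what the theorem lets you assume, so your argument does not prove the stated implication. Worse, even granting such a gap, the criterion ``$\lambda_1(\mathrm{link}) > 1/2 \Rightarrow$ (T)'' is a statement about \emph{triangular} (simplicial) presentation complexes; the link of the vertex of a $jk$-gon presentation complex over $S$ is not the link of a simplicial complex, so matching its edge distribution to that of a $k$-gonal link does not let you conclude (T) for $G$. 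Finally, your splitting sends density $d'$ to $\tilde d \approx jd' $, which exceeds $1$ for large $j$, and the proposed repair --- that adding random edges can only improve the normalized spectral gap --- is not a theorem. Your observation that the naive splitting produces a \emph{quotient} $G'$ of $G$ (so (T) flows the wrong way) is exactly right; the problem is that the spectral detour does not actually escape it.

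The move that makes the reduction work is to change the \emph{alphabet} rather than the density. Given $G^+ = \langle S \mid R^+\rangle$ with positive relators of length $jk$, regard each relator as a word of length $k$ over the set $W^+_j$ of positive words of length $j$ (a generating set of size $n^j$, which still tends to infinity with $n$). This produces a group $\Gamma = \langle W^+_j \mid R^+_k\rangle$ that is literally, and with the uniform distribution, a random group in $\mathcal{M}^+_k(|W^+_j|, d)$ --- so the black-box hypothesis applies to $\Gamma$ directly. One then checks that the natural map $\phi\colon \Gamma \to G^+$ has image of finite index (a transversal is given by the words of length $< j$, obtained by padding arbitrary words with trivial insertions $x^{-1}x$ so that positive and negative blocks have length divisible by $j$), and (T) passes from $\Gamma$ to $\phi(\Gamma)$ by surjection and from $\phi(\Gamma)$ to $G^+$ by finite index. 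The non-positive model $\mathcal{M}_{jk}(n,d)$ is handled not by splitting reduced words but by discarding all non-positive relators: since $|W^+_j| > 2^{-j}|W_j|$, the surviving set $R^+ \subseteq R$ still has more than $(2n-1)^{jkd'}$ elements for any $d' < d$ and large $n$, giving a group $G^+ \in \mathcal{M}^+_{jk}(n,d')$ that surjects onto $G$. This is where the loss from $d$ to $d'$ in the statement actually comes from, and it entirely avoids the reduction issues you raise in your third obstacle.
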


In particular, this improves the known upper bounds in the $3j$-gonal  models from $1/2$ to $1/3$. It also shows that to find upper bounds for the sharp threshold of Property (T), it suffices to investigate the \emph{positive} $k$-gonal models. Further, this provides evidence supporting the following conjecture:

\begin{conj}
Let $d_{(T)}(k)$ denote the sharp threshold for Property (T) in the $k$-gonal model. Then $\lim_{k \to \infty} d_{(T)}(k)$ exists, and is equal to the sharp threshold of the Gromov model.
\end{conj}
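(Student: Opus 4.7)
The plan is to split the conjecture into three assertions: existence of the limit $d_\infty := \lim_{k\to\infty} d_{(T)}(k)$, the upper bound $d_\infty \leq d_{\mathrm{Gr}}$, and the matching lower bound $d_\infty \geq d_{\mathrm{Gr}}$, where $d_{\mathrm{Gr}}$ denotes the Gromov sharp threshold for Property (T). Since this is a conjecture, the proposed route is speculative, especially for the lower bound. For existence of the limit, I would combine Theorem~\ref{thm:main} with a comparison lemma of the form $d_{(T)}(k) \leq d_{(T)}^+(k) + o(1)$, which would come from viewing a positive $k$-gonal presentation as a specialization of a signed $k$-gonal presentation (or from a random sign-assignment coupling on relators). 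Combined with Theorem~\ref{thm:main}, this forces $d_{(T)}(jk) \leq d_{(T)}^+(k)$ and hence monotonicity of the thresholds along any divisibility chain $k \mid 2k \mid 6k \mid \cdots$. A squeeze argument along the subsequence $k!$ would promote subsequential convergence to convergence of the full sequence, defining $d_\infty$.

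For the upper bound $d_\infty \leq d_{\mathrm{Gr}}$, fix $d > d_{\mathrm{Gr}}$. The essential observation is that the $k$-gonal model at density $d$ in $n$ generators is, for fixed $k$, the Gromov density model at relator length $\ell = k$ in $n$ generators; since $d_{\mathrm{Gr}}$ is independent of $n \geq 2$, the spectral-gap mechanism producing Property (T) via \cite{Zuk, KK} should apply uniformly. The technical content is then a uniform large-deviation bound: the probability that a Gromov-style sample at length $\ell$ and density $d$ fails Property (T) should decay to zero as $\ell \to \infty$ uniformly in $n$. Extracting such uniformity from the $\mathrm{link}$-spectral-gap argument, and then interchanging the order of the limits $n \to \infty$ and $k \to \infty$, is the main work for this direction. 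Once established, it yields $d_{(T)}(k) \leq d$ for all large $k$.

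The main obstacle is the matching lower bound $d_\infty \geq d_{\mathrm{Gr}}$: for $d < d_{\mathrm{Gr}}$ one must show that $k$-gonal random groups at density $d$ fail Property (T) for all sufficiently large $k$. Every known proof of failure of (T) in Gromov random groups (Ollivier--Wise \cite{OW}, Mackay--Przytycki \cite{MP}, \cite{M}) proceeds via explicit hypergraph and wall constructions producing non-trivial actions on CAT(0) cube complexes. To conclude the conjecture, one would need to adapt these constructions uniformly in $k$ to the $k$-gonal setting, verifying that the combinatorial inputs --- overlap probabilities, two-sidedness of walls, embeddedness of hypergraphs --- persist as both $n$ and $k$ tend to infinity. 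This step is technically demanding because the existing hypergraph analyses are calibrated to Gromov relators of a prescribed length with fixed $n$, and I would expect the argument to stall here with current technology: no uniform hypergraph construction valid across $k$-gonal models is available in the literature, and producing one would be a substantial contribution in its own right.
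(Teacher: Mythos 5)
This statement is labeled a \emph{conjecture} in the paper, and the paper offers no proof of it; Theorem~\ref{thm:main} is presented only as evidence in its favor. Your proposal is likewise not a proof but a research program, and it has essential gaps at every stage, not only at the lower bound you flag. For the existence of the limit: Theorem~\ref{thm:main} gives, in effect, $d_{(T)}(jk) \leq d^+_{(T)}(k)$ (an upper bound on the full-model threshold for multiples of $k$ in terms of the positive-model threshold at $k$), but your proposed converse comparison $d_{(T)}(k) \leq d^+_{(T)}(k) + o(1)$ is not established anywhere and is not obviously true --- restricting to positive relators changes the model in a way that the paper only controls in one direction (a positive presentation at slightly lower density sits inside a full presentation, Remark~\ref{rmk: surjection and finite extension} then transfers (T) \emph{down} the surjection). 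Moreover, even granting monotonicity along the chain $k \mid 2k \mid 6k \mid \cdots$, convergence along the subsequence $k!$ does not ``promote'' to convergence of the full sequence without some bound relating $d_{(T)}(k)$ for $k$ strictly between consecutive factorials; no such bound is available.

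For the upper bound $d_\infty \leq d_{\mathrm{Gr}}$, your argument conflates two different limits: in the $k$-gonal model $k$ is fixed and $n \to \infty$, whereas in the Gromov model $n$ is fixed and the relator length tends to infinity. The interchange of these limits is precisely the open content of the conjecture, and the uniform-in-$n$ large-deviation bound you would need from the spectral-gap argument of \cite{Zuk, KK} does not exist in the literature. Note also that $d_{\mathrm{Gr}}$ itself is unknown (only pinned between $3/14$ and $1/3$), so ``fix $d > d_{\mathrm{Gr}}$'' cannot currently be instantiated. In short: the paper does not claim a proof, and your outline, while a reasonable sketch of what a proof would have to accomplish, does not close any of the three gaps it identifies.
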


\subsection*{Acknowledgements}
I would like to thank Danny Calegari and Tomasz Odrzygóźdź for valuable comments and feedback. This research was partially supported by a Clare Boothe Luce Professorship in Mathematics from the Henry Luce Foundation.

\section{Background}
 We begin by defining the $k$-gonal model of random groups.

\begin{defn}[\cite{calum}]
Fix a set $S$ of cardinality $n$. The \emph{$k$-gonal model of random groups}, denoted $\mathcal{M}_k(n, d),$ is the set of group presentations $\langle S|R\rangle$ such that $R$ is a set of $(2n-1)^{kd}$ cyclically reduced words on $S\cup S^{-1}$ of length $k$, along with the uniform probability measure. We say that a $k$-gonal random group at density $d$ satisfies a property P \emph{with overwhelming probability}, abbreviated w.o.p, if
    \[
        \lim_{n\to\infty}\mathbb{P}(\mbox{the group defined by a presentation in }\mathcal{M}_k(n, d) \mbox{ satisfies P}) = 1.
    \]
\end{defn}

A related family of models of random groups restricts possible relators to \textit{positive} words, i.e. words on $S$ but not on $S^{-1}$. These are called the \textit{positive $k$-gonal models of random groups} and are denoted $\mathcal{M}^+_k(n, d).$


We now define (Kazhdan's) Property (T) for discrete, finitely generated groups. The notation is suggestive - the idea of Property (T) is that the trivial representation of the group must be isolated in some metric. More formally, we can define this property via (almost) invariant vectors: Let $\Gamma$ be a finite generated group with finitely generating set $S$. If $\mathcal{H}$ is a Hilbert space and $\pi: \Gamma \to U(\mathcal{H})$ is a unitary representation of $\Gamma$ on $\mathcal{H}$, then $\pi$ has \emph{almost invariant vectors} if, for every $\varepsilon>0$, there exists some $u_\varepsilon \in \mathcal{H}$ such that for every $s \in S$, $\|\pi(s)u_\varepsilon - u_\varepsilon\| < \varepsilon\|u_\varepsilon\|.$ A vector $u \in \mathcal{H}$ is \emph{invariant} if $\pi(g)(u) = u$ for every $g \in \Gamma$.

\begin{defn}[\cite{Kaz}]
A finitely generated group $\Gamma$ has \emph{Property (T)} if, for every $\mathcal{H}$ and $\pi$, the following holds: if $\pi$ has almost invariant vectors, then $\pi$ has a nonzero invariant vector.
\end{defn}

The first results towards demonstrating Property (T) in random groups were first stated in \cite{Zuk}, and fully proved in \cite{KK}. The proof involved passing between several different models of random groups and analyzing spectral properties of random graphs.

\begin{thm}[\cite{KK} Theorem 3.14] \label{thm:pos triangular pt}
A random group in $\mathcal{M}^+_3(n, d')$ with density $d'>1/3$ satisfies Property (T) with overwhelming probability.
\end{thm}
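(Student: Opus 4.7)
The plan is to invoke the Garland--\.{Z}uk local spectral criterion, which asserts that a group $\Gamma$ acting properly and cocompactly on a simply connected simplicial $2$-complex $X$ has Property~(T) provided that for every vertex $v \in X$ the link $\mathrm{lk}(v,X)$ is a connected graph whose normalized Laplacian has second smallest eigenvalue $\lambda_1$ strictly greater than $1/2$. Applied to the Cayley $2$-complex of a triangular presentation $\langle S \mid R\rangle$ with $|S|=n$, this reduces Property (T) to a spectral statement about a single graph $L$: its vertex set is $S \sqcup S^{-1}$, and every relator $a_1 a_2 a_3 \in R$ contributes three edges $\{a_i^{-1}, a_{i+1}\}$ for $i \in \mathbb{Z}/3\mathbb{Z}$. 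It thus suffices to show that the random graph $L$ arising from an $\mathcal{M}^+_3(n, d')$-presentation is connected and has $\lambda_1(L) > 1/2$ with overwhelming probability.

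The structural feature of the \emph{positive} model that makes this workable is bipartiteness. Since every letter $a_i$ of a positive relator lies in $S$, every edge of $L$ joins a vertex of $S^{-1}$ to a vertex of $S$, so $L$ is a random bipartite (multi)graph on parts of equal size $n$. With $m := (2n-1)^{3d'}$ relators, each contributing three edges whose endpoints come from an essentially uniform distribution on $S^{-1} \times S$, the expected degree of $L$ is $\bar{k} = 3m/n \sim 3(2n-1)^{3d'-1}$. The crucial point is that, because $3d' > 1$, the average degree $\bar{k}$ diverges with $n$, which is exactly the regime in which the spectral gap can be pushed above $1/2$.

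The technical heart of the proof is then a spectral gap estimate for this random bipartite graph: with overwhelming probability, all non-trivial eigenvalues of the normalized adjacency operator of $L$ should be of size $O(\bar{k}^{-1/2})$, forcing $\lambda_1(L) \to 1$ as $n \to \infty$, and in particular exceeding $1/2$ for $n$ large. The natural approach is the trace / moment method (Kahn--Szemer\'edi, Friedman, Broder--Shamir): bound $\mathbb{E}[\mathrm{tr}(M^{2\ell})]$ for the centered adjacency operator $M$, choose $\ell = \ell(n)$ growing slowly, and then use Markov's inequality to control all eigenvalues uniformly. The main obstacle here, and, I believe, the gap in the original sketch of \cite{Zuk} that was closed in \cite{KK}, is that the three edges produced by a single cyclically reduced positive relator are \emph{not} independent, so one cannot directly quote a theorem about the Erd\H{o}s--R\'enyi bipartite model; one must either absorb these short-range correlations into the combinatorial bookkeeping of the trace method or else couple to an i.i.d.\ model via a careful probabilistic argument.

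Finally, connectedness of $L$ follows from a routine first-moment computation: a bipartite random graph whose expected degree tends to infinity is connected with overwhelming probability. Combining connectedness with $\lambda_1(L) > 1/2$ and invoking the Garland--\.{Z}uk criterion yields Property~(T) for the random group at every density $d' > 1/3$. The step I expect to be genuinely hard is the spectral bound in the presence of edge correlations; everything else is essentially standard.
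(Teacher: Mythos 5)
This statement is not proved in the paper at all: it is imported verbatim from \cite{KK} (their Theorem 3.14) and used as a black box, so there is no internal argument to compare against. Measured against the proof in \cite{KK}, your outline reconstructs the correct strategy: \.{Z}uk's local spectral criterion reduces Property (T) for a triangular presentation to connectivity and a spectral gap $\lambda_1 > 1/2$ for the link graph on $S \sqcup S^{-1}$; positivity of the relators makes that graph bipartite with parts $S$ and $S^{-1}$ (and, usefully, makes every positive word automatically cyclically reduced, so a relator is simply a uniform triple in $S^3$); and the hypothesis $d' > 1/3$ is exactly what makes the expected degree $3(2n-1)^{3d'}/n$ diverge, which is the regime where one can hope to push $\lambda_1$ above $1/2$.

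As a proof, however, the proposal has a genuine gap, and it is the one you flag yourself: the claim that all nontrivial eigenvalues of the normalized adjacency operator are $O(\bar{k}^{-1/2})$ with overwhelming probability \emph{is} the theorem, and neither of the two routes you gesture at is carried out. The trace/moment method for this model is not off-the-shelf: one must control closed walks that revisit the correlated edge-triples contributed by single relators, account for multiple edges, and verify that the bookkeeping still yields a bound beating $1/2$ (not merely $o(1)$ in some normalization) at densities arbitrarily close to $1/3$, where $\bar{k}$ grows only like a small power of $n$. The alternative of coupling to an independent-edge bipartite model also requires substantive work, since the two distributions are not close in total variation in this regime and one must argue that the spectral statistic itself transfers. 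Closing precisely this kind of gap between \.{Z}uk's sketch in \cite{Zuk} and a complete argument is what occupies the bulk of \cite{KK}. So your write-up is an accurate road map of the known proof, but the step that carries all the difficulty is deferred rather than done; for the purposes of the present paper the honest move is to do what the author does and cite \cite{KK} for this statement.
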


In fact, they also showed that this is true in $\mathcal{M}_3(n, d)$ and in the Gromov model of random groups (at the same density).

\begin{remark}\label{rmk: surjection and finite extension}
Property (T) is preserved by surjections and finite extensions (see \cite{KK} Remark 2.10 and \cite{BdlHV} Theorem 1.7.1). In other words, if $H$ is a group with Property (T) and $H$ surjects onto $H'$, then $H'$ has Property (T); and if $H \stackrel{f.i.}{\leq} H''$ is a sub-group of finite index, then $H''$ has Property (T).
\end{remark}

The tempting argument is to say if $G \in \mathcal{M}_{jk}(n, d)$ then $G = \langle S | R \rangle$ and any relator in $R$ can be interpreted as a word of length $k$ over the alphabet of words of length $j$ on $S \cup S^{-1}$, denoted $|W_j|$. Then $G$ is in $\mathcal{M}_k(|W_j|, d).$ However, as is pointed out in \cite{KK}, problems arise around the requirement that relators be \textit{reduced}. In particular, a word of length $kj$ is a word of length $k$ in the alphabet of words of length $j$; however, a reduced word of length $k$ in the alphabet of words of length $j$ might not be reduced as a word of length $jk$. So we must first consider an intermediate group, $G^+$, whose relators are \textit{positive} words on $S$. This removes the reduction problem, and we can safely interpret this group as a random group in $\mathcal{M}_k^+(n, d)$.

The method of proof closely follows the argument used to prove  Theorem 4.3 in \cite{Odr}, which shows that groups in the $6$-gonal model w.o.p. satisfy Property (T). In particular, to prove that a random group $G \in \mathcal{M}_{jk}(n,d)$ satisfies Property (T), we construct two groups $G^+$ and $\Gamma$ so that
\[
        \Gamma \stackrel{\phi}{\twoheadrightarrow} \phi(\Gamma) \stackrel{f.i.}{\leq} G^+ \twoheadrightarrow G,
\]
and then show that $\Gamma$ satisfies Property (T).

There are two parts to this proof. The first is showing that $\phi(\Gamma)$ is finite index in $G^+$. We find a finite set of coset representatives to prove this explicitly, generalizing the method used in \cite{Odr}. The second is in showing that the group $\Gamma$ can be interpreted as a typical representative of random groups in $\mathcal{M}^+_k(m, d')$ for $d' >1/3.$ For this part of the argument, we mimic the method developed in the proof of Theorem A in \cite{KK}.

\section{Property (T) in jk-Gonal Models}
Fix a generating set $S$ of cardinality $n$. Let $W^+$ be the set of positive words on $S$ and let $W_j$ be the words of length $j$ on $S$. For any set $R$ of words on $S$, let $R^+ = R \cap W^+$. Then for any group $G = \langle S | R\rangle$ in $\mathcal{M}_{jk}(n, d)$, there is a group $G^+ = \langle S | R^+\rangle$ that surjects onto $G$.

Since $G$ is in the $jk$-gonal model, every word in $R$ and $R^+$ has length $jk$.  There is a natural injective map $\phi: W^+_j \to F(S)$, where $F(S)$ denotes the free group on $S$, given by the labeling on each word in $W^+_j$.  Define $R^+_k = \phi^{-1}(R^+),$ and notice that every word in $R^+_k$ has length $k$ over $W_j^+$. Let $\Gamma = \langle W_j^+ | R^+_k \rangle$. Then $\phi$ induces  a homomorphism, also denoted $\phi$, mapping $\Gamma \to G^+$.

Note also that $|R^+_k| = |\phi(R^+_k)| = |R^+|,$ so if $G^+ \in \mathcal{M}^+_{jk}(n, d'),$ then $\Gamma \in \mathcal{M}^+_k(|W^+_j|, d')$. Furthermore, this map takes the uniform probability distribution on $\mathcal{M}^+_{jk}(n, d')$ to the uniform probability distribution on $\mathcal{M}^+_k(|W^+_j|, d')$, because $\phi$ induces a 1-1 map on possible sets of relators.

We will show that $\phi(\Gamma)$ is a finite index subgroup of $G^+$ by finding an explicit set of coset representatives of $\phi(\Gamma)$ in $G^+$. We note that an element of $G^+$ is in $\phi(\Gamma)$ if and only if it is expressible as a string of words of length $k$, all of which are positive or the inverse thereof. We do this by inserting trivial words that allow us to group positive and negative letters into subwords of length $j$.

In the following proof, for any word $w$, $|w|_j$ denotes the word length of $w$ modulo $j$.

\begin{lemma}\label{lem:finite index}
$\phi(\Gamma)$ is a finite index subgroup of $G^+$.
\end{lemma}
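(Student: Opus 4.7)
The plan is to produce a finite set of coset representatives for $\phi(\Gamma)$ in $G^+$. The subgroup $\phi(\Gamma)$ consists of precisely the elements of $G^+$ representable in $F(S)$ by a word that parses as a concatenation of length-$j$ blocks, each block entirely positive or entirely negative. I will show that every element of $G^+$ is congruent modulo $\phi(\Gamma)$ to a word of length strictly less than $j$ in $S \cup S^{-1}$; as there are only finitely many such words, this forces $[G^+ : \phi(\Gamma)]$ to be finite.

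Given a reduced representative $w = x_1 x_2 \cdots x_L \in F(S)$ of $g \in G^+$, I would describe a left-to-right procedure that maintains a uniform-sign buffer $B$. On reading $x_i$: if $B$ is empty, or $x_i$ matches the sign of $B$ with $|B| < j$, then append $x_i$ to $B$ and, if $|B|$ has reached $j$, output $B$ as a closed block and reset. If instead $x_i$ is of opposite sign to $B$ while $|B| = p < j$, insert the trivial word $s_0^{\varepsilon(j-p)} s_0^{-\varepsilon(j-p)}$ between $B$ and $x_i$, where $\varepsilon$ is the sign of $B$ and $s_0 \in S$ is an arbitrary fixed generator: the first factor extends $B$ to a uniform-sign block of length $j$, which is output and closed, while the second factor initiates a new opposite-sign buffer into which $x_i$ is then appended. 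After all of $w$ is consumed, the residual buffer $r$ has $|r| < j$, since the buffer is reset whenever it reaches length $j$.

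Three checks complete the argument: (i) each insertion is of the form $v v^{-1}$, so the rewritten word equals $w$ in $F(S)$ and hence in $G^+$; (ii) each closed block is a uniform-sign word of length exactly $j$, hence represents an element of $W_j^+ \cup (W_j^+)^{-1}$ in $G^+$, so the product $u$ of closed blocks lies in $\phi(\Gamma)$; and (iii) because each inserted padding matches the sign of the letters on either side of the original sign change, no new sign changes are created, and the procedure halts after at most $L-1$ insertions. Hence $w = u \cdot r$ in $F(S)$, yielding $g \equiv r \pmod{\phi(\Gamma)}$ with $|r| < j$, and the number of cosets is bounded by the number of words of length less than $j$ over $S \cup S^{-1}$. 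The main technical point is the padding construction itself — one must ensure that insertions do not cascade — and observation (iii) handles this, since insertions occur only at the sign changes already present in $w$.
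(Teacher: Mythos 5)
Your proof is correct and follows essentially the same strategy as the paper's: both arguments insert trivial words $vv^{-1}$ at the sign changes of a representative word so as to regroup it into uniform-sign blocks of length $j$ (hence elements of $\phi(W_j^+)\cup\phi(W_j^+)^{-1}$), leaving a residual of length less than $j$, so that the finitely many words of length $<j$ contain a transversal. The only cosmetic differences are that you process the word left-to-right with a buffer (producing right-coset representatives) while the paper peels off blocks by induction from the other end (producing left-coset representatives); either yields finite index.
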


\begin{proof}
Let $w$ be a word on $S \cup S^{-1}$. Without loss of generality, assume that the first letter of $w$ is in $S^{-1}$. Then $w$ can be written as $$w = A_\ell a_\ell \cdots A_2 a_2 A_1 a_1,$$ where $\{a_i\}$ are positive words on $S$, $\{A_i\}$ are negative words on $S$, and $a_1$ might be empty.

Let $x$ be a positive word of length $j - |a_1|_j$ and let $y$ be a positive word of length $j - |A_1x^{-1}|_j$.  Then we have $$w = A_\ell a_\ell \cdots A_2 a_2 y y^{-1}A_1 x^{-1}xa_1.$$ Note that $xa$ is a positive word with length divisible by $j$, and $y^{-1}A_1 x^{-1}$ is a negative word with length divisible by $j$. Therefore $w = A_\ell a_\ell \cdots A_2 a'_2 u,$ where $u \in \phi(\Gamma)$. By induction, we see that $w \in A'_\ell\phi(\Gamma)$ where $A'_\ell$ is a positive word on $S$. Then $A'_\ell = a z,$ where $j$ divides $|z|$ and $|a|< j$, so $z \in \phi(\Gamma)$. Therefore $w = azv$, where $zv \in \phi(\Gamma)$ and $|a| < j$.

Thus the (finite) set of words of length $< j$ contains a transversal of $\phi(\Gamma)$ in $G^+$, and $\phi(\Gamma)$ has finite index in $G^+$.
\end{proof}

We are now ready to prove Theorem \ref{thm:main}:

\main*

\begin{proof}
We first show that a random group $G^+$ in $\mathcal{M}^+_{kj}(n, d)$ has Property (T). Let $m = |W^+_{j}|$. By the construction above, given such a $G^+$ we can find a group $\Gamma$ in $\mathcal{M}^+_k(m, d)$ and a map $\phi: \Gamma to G^+$. The distribution induced on $\mathcal{M}^+_k(m, d)$ in this way is uniform and by Lemma \ref{lem:finite index} $\phi(\Gamma)$ is a finite index subgroup of $G^+$. Since $\Gamma$ satisfies Property (T) with overwhelming probability, by Remark \ref{rmk: surjection and finite extension}, so does $G^+$.

Now let $G =\langle S|R\rangle$ be a group in $\mathcal{M}_{kj}(n, d).$ Note that $|W^+_j| > \frac{1}{2^j}|W_j|.$ Therefore, the expected number of elements of $R^+$ is larger than $\frac{1}{2^j}|R|.$ Furthermore, for any $d'<d$ there is some $N_{d'}$ so that for $n > N_{d'},$ we have
\[
 |R^+| > \frac{1}{2^j}|R| = \frac{1}{2^j}(2n-1)^{kjd} > (2n-1)^{kjd'}.
\]

Fix $d' \in (d_0, d)$. If we enumerate the elements of $W^+_j$ and take $R'$ to be the set of the first $(2n-1)^{kjd'}$ elements of $W^+_j$ which are in $R$, we can find a group $G^+ \langle S | R'\rangle\in \mathcal{M}^+_{kj}(n, d')$ that surjects onto $G$. This gives a uniform distribution on $\mathcal{M}^+_{kj}(n, d')$, so w.o.p $G^+$ satisfies Property (T), and by Remark \ref{rmk: surjection and finite extension} so does $G$.
\end{proof}

By Theorem 3.14 in \cite{KK}, with overwhelming probability random groups in $\mathcal{M}^+_3(n, d)$ satisfy Property (T) at density $d>1/3$. By applying Theorem \ref{thm:main} we get the following result.

\begin{cor} With overwhelming probability, a random group at density $d>1/3$ in $\mathcal{M}_{3k}(n, d)$ or $\mathcal{M}^+_{3k}(n, d)$ satisfies Property (T).
\end{cor}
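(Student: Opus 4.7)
The plan is to derive this as an immediate corollary of Theorem \ref{thm:main}, using Theorem \ref{thm:pos triangular pt} to supply the base case.

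First, I would observe that Theorem \ref{thm:pos triangular pt} is precisely the hypothesis of Theorem \ref{thm:main} with $k = 3$ and $d_0 = 1/3$: with overwhelming probability, a random group in $\mathcal{M}^+_3(n, d)$ at any density $d > 1/3$ satisfies Property (T). So no additional verification is needed at the base level.

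Next, fix any density $d > 1/3$ and any positive integer $k$ (the $k$ appearing in the corollary statement). Because the conclusion of Theorem \ref{thm:main} gives Property (T) for every $d' \in (d_0, d_{\text{hyp}})$, I would choose an auxiliary density $d^{*} \in (d, 1)$ so that $d \in (1/3, d^{*})$. Theorem \ref{thm:pos triangular pt} guarantees that the hypothesis of Theorem \ref{thm:main} holds at density $d^{*}$ (with $d_0 = 1/3$), and then applying Theorem \ref{thm:main} with its parameter $j$ equal to our $k$ yields exactly the desired conclusion: with overwhelming probability, random groups in $\mathcal{M}_{3k}(n, d)$ and $\mathcal{M}^{+}_{3k}(n, d)$ satisfy Property (T).

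There is no genuine obstacle in this final step; all of the real work is absorbed into Theorem \ref{thm:main} (handled above via the intermediate groups $G^{+}$ and $\Gamma$ together with the coset-counting argument of Lemma \ref{lem:finite index}) and into the classical Theorem \ref{thm:pos triangular pt}. The only small point to be careful about is that the conclusion of Theorem \ref{thm:main} has a strict inequality $d' < d_{\text{hyp}}$, which is why one must pass to an auxiliary $d^{*} > d$; since the hypothesis density range is open and unbounded above by $1$, this is always possible.
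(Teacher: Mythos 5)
Your proposal is correct and matches the paper's own (one-line) derivation: apply Theorem \ref{thm:pos triangular pt} as the hypothesis of Theorem \ref{thm:main} with $d_0 = 1/3$ and the theorem's $j$ playing the role of the corollary's $k$. Your extra care in choosing an auxiliary density $d^{*} > d$ to account for the strict inequality $d' < d$ in the theorem's conclusion is a small point the paper elides, but it does not change the approach.
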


\printbibliography
\end{document}